\newcommand{\overbar}[1]{\mkern 1.5mu\overline{\mkern-1.5mu#1\mkern-1.5mu}\mkern 1.5mu}
\let\theoremstyle\@undefined                        
\newtheorem{nntheorem}{\bf Theorem}
\newtheorem{nnassumption}{\bf Assumption}
\newtheorem{nndefinition}{\bf Definition}
\newtheorem{nnlemma}{\bf Lemma}
\newtheorem{nncorollary}{\bf Corollary}
\newtheorem{nnproposition}{\bf Proposition}
\newtheorem{nexample}{\bf Example}
\newenvironment{theorem}
{\begin{nntheorem}\it}
{\end{nntheorem}}
\newenvironment{proposition}
{\begin{nnproposition}\it}
{\end{nnproposition}}
\newenvironment{lemma}
{\begin{nnlemma}\it}
{\end{nnlemma}}
\newenvironment{assumption}
{\begin{nnassumption}\it}
{\end{nnassumption}}
\newenvironment{nnexample}
{\begin{nexample}\rm}{\end{nexample}}
\newtheorem{nnremark}{\bf Remark}
\newenvironment{remark}{\begin{nnremark}}{\hfill \hspace*{1pt}\hfill $\circ$\end{nnremark}}
\newenvironment{proof}{{\bf Proof.}}{\hfill \hspace*{1pt}\hfill $\bullet$}
\def\RR{\mathbb{R}}
\def\NN{\mathbb{N}}
\def\epsilon{\varepsilon}
\def\LR{\mathcal L}     
\def\LR{{\mathfrak L}}  
\DeclareMathOperator{\argmax}{arg max}
\renewcommand{\leq}{\leqslant}
\renewcommand{\geq}{\geqslant}
\newcommand{\A}{\mathcal{A}}
\newcommand{\B}{\mathcal{B}}
\newcommand{\C}{\mathcal{C}}
\newcommand{\Hi}{\mathcal{H}}
\newcommand{\W}{\mathcal{W}}
\newcommand{\Pl}{\mathcal{P}}
\newcommand{\I}{\mathrm{I}}
\newcommand{\M}{\mathcal{M}}
\newcommand{\F}{\mathcal{F}}
\title{Forwarding-Lyapunov design for the stabilization of coupled ODEs and exponentially stable PDEs}
\author{Swann Marx$^{1}$,  Daniele Astolfi$^{2}$ and Vincent Andrieu$^{2}$
\thanks{$^{1}$ Swann Marx is with LS2N, \'Ecole Centrale de Nantes $\&$ CNRS UMR 6004, F-44000 Nantes, France.
{\tt\small swann.marx@ls2n.fr}.}
\thanks{$^{2}$ Daniele Astolfi and Vincent Andrieu 
are with Univ Lyon, Universit\'e Claude Bernard Lyon 1, CNRS, LAGEPP UMR 5007, 43 boulevard du 11 novembre 1918, F-69100, Villeurbanne, France.  {\tt\small name.surname@univ-lyon1.fr}} \thanks{This research was partially supported by the French Grant ANR ODISSE (ANR-19-CE48-0004-01).}}
\begin{document}

\maketitle

\begin{abstract}
This paper is about the stabilization of a cascade system composed by an infinite-dimensional system, that we suppose to be exponentially stable, and an ordinary differential equation (ODE), that we suppose to be marginally stable. The system is controlled through the infinite-dimensional system. Such a structure is particularly useful when applying the internal model approach on infinite-dimensional systems. Our strategy relies on the forwarding method, which uses a Lyapunov functional and a Sylvester equation to build a feedback-law. Under some classical assumptions in the output regulation theory, we prove that the closed-loop system is globally exponentially stable.  
\end{abstract}

\section{Introduction}

Many researchers devoted their attention 
in the last decades to the stabilization of
coupled systems composed by the interconnection or the cascade of ODEs and PDEs, see, e.g. \cite{marx2021forwarding,bribiesca,tang2011state}.
In this article, we consider a stabilization problem of cascade systems in which the first subsystem is an infinite-dimensional system, and the second one is an ODE.
This type of interconnections may appear, for instance, in two contexts: 
in output regulation problems, see, e.g.,  
\cite{xu1995robust,terrand2019adding,paunonen2014internal,paunonen2015controller,deutscher2015backstepping,deutscher2019robust},
or when the actuator dynamics is 
modelled as a PDE, 
see, e.g. \cite{krstic2009compensating,krstic2008backstepping,jankovic2009forwarding}.

In the context of output regulation, 
the typical internal model approach 
consists in adding in the loop of the controller an additional dynamics that is a copy 
(i.e. the internal model) of the dynamics 
generating the disturbances to be rejected and/or the references to be tracked. This external dynamics is usually denoted as exosystem. A really well known example of the internal model approach is the integral action controller  
for tracking or rejection 
of constant signals, see, e.g. 
\cite{terrand2019adding,balogoun2021iss,zhang2019pi,coron2019pi}. For more sophisticated exosystems
represented, for instance, by the combination of a finite number of linear oscillators, 
there exist some results devoted to abstract systems, i.e., systems described by operators, 
see, for instance, 
\cite{paunonen2014internal,paunonen2015controller}. These results are based mainly on a frequency-domain approach, and the systems that this theory targets are linear. We may also mention \cite{deutscher2019robust}, where the backstepping method is used in order to achieve some output regulation objectives. In this paper, we propose an alternative approach, based on the forwarding method. 

The forwarding method is a nonlinear Lyapunov method used for the stabilization of 
cascade systems, see, e.g. 
\cite{mazenc1996adding}. It is based on the construction of a Lyapunov function and of another function, which is given by, in the linear case, the solution of a Sylvester equation. 
To the best of authors' knowledge, until now, there exist few extensions of this method to the infinite-dimensional case:
\cite{jankovic2009forwarding}
for the stabilization of systems with 
input delays; 
\cite{terrand2019adding,balogoun2021iss}, where some PI controllers are designed for hyperbolic systems and a Korteweg-de Vries equation; \cite{marx2021forwarding,astolfi2021repetitive}, where a stabilization problem involving an infinite-dimensional system is considered. It is worth noticing that our approach might be used also in the context of finite-dimensional systems with an actuator governed by an infinite-dimensional system, as illustrated in \cite{bekiaris2014compensation}
or \cite{jankovic2009forwarding}.

Our contribution is therefore the following: under some structural assumptions, by means of the forwarding method, we design a feedback-law in order to stabilize a cascade system composed by an ODE and an infinite-dimensional system described by an abstract operator. The control, which appears in the infinite-dimensional system, is described with a bounded operator, which corresponds to a distributed control in the PDE context. The output, modeled with an unbounded operator, i.e. the trace of the solution in the PDE case, enters the ODE. The abstract operator is supposed
to be exponentially stable (or, alternatively, that 
has already been stabilized) while the ODE is supposed to be dissipative.
The proposed design uses a linear partial-state feedback law which uses only the state of the ODE, and obtained via the solution of a Sylvester equation. We further are able to deduce that the origin of the closed-loop system is exponentially stable by using a strictification technique presented in \cite{praly2019observers}. Finally, we obtain a condition on a scalar gain allowing to achieve the stabilization of the cascade system.

We believe that our design may be, in some cases, easier to apply than  the design proposed in 
\cite{paunonen2015controller}
as we do not need the computation 
of the transfer function for abstract operators.
Furthermore, we also generalize the construction in 
\cite{paunonen2015controller}
as we allow the ODE to be dissipative 
and not only conservative. 
We also extend the class of abstract operator
with respect to the technique proposed in 
\cite{deutscher2019robust} based on the backstepping approach.
Finally, since the proof is based on the 
use of a
Lyapunov functional (and not on purely linear arguments), we 
believe that the proposed
result may be extended in the future to some classes of 
nonlinear systems.

This rest of the paper is organized as follows. Section~\ref{sec_main} presents the mathematical context of this problem and the main results. In Section~\ref{sec_proofs}, proofs of our main results are given. Section~\ref{sec_illustration} illustrates our results on an example, namely a heat equation. Finally, Section~\ref{sec_conclusion} we draw the conclusions and we gather some further research lines to be followed.

\textbf{Notation.} Set $\RR_+=[0,\infty)$. For any $n\in \NN$, we denote by $|\cdot|$ the Euclidean norm in $\RR^n$ and, with a slight abuse of notation, we keep the same notation for matrix norm. Given two Hilbert spaces $\Hi_1$ and $\Hi_2$, the space $\LR(\Hi_1,\Hi_2)$ denotes the space of functions bounded from $\Hi_1$ to $\Hi_2$, and $\LR(\Hi_1)=\LR(\Hi_1,\Hi_1)$. Given a Hilbert space $\Hi$, $\I_\Hi$ denotes the identity operator. For $\RR^n$, this identity operator is given by $\I_n$. 

\section{Main results}

\label{sec_main}
\subsection{Problem statement}
Consider a Hilbert space $\Hi$ equipped with the norm $\Vert \cdot\Vert_\Hi$ and the scalar product $\langle \cdot,\cdot\rangle_\Hi$. We are interested in the following cascade system:

\begin{equation}
\label{eq:open-loop}
\left\{
\begin{array}{ll}
\frac{d}{dt} \phi(t) = \A \phi(t) + \B u(t),\\[.5em]
\frac{d}{dt}z(t) = Sz(t) + \Gamma \C \phi(t),\\[.5em]
\phi(0)=\phi_0, \: z(0) = z_0,
\end{array}
\right.
\end{equation}
 where  $\A:\: D(\A)\subset \Hi\rightarrow \Hi$ with $D(\A)$ densely defined in $\Hi$, $\B\in \LR(\RR^m,\Hi)$, $\C \in \LR(D(\A),\RR^p)$, $S\in \RR^{r\times r}$, and $\Gamma\in \RR^{r\times p}$. Therefore, the control operator $\B$ is supposed to be bounded, while the output operator might be unbounded. 

The objective of this work is to design a feedback law for the stabilization of the origin of system \eqref{eq:open-loop}. To this end, we follow the so-called  ``forwarding approach'' introduced
for finite dimensional nonlinear systems in \cite{mazenc1996adding}, and already successfully 
extended to other context of cascade systems composed by one PDE and one ODE, see, e.g., 
\cite{terrand2019adding}, \cite{marx2021forwarding}.
 To do so, the following set of assumptions is stated.

\begin{assumption}
\label{ass:openloopstable}
 The operator $\A$ generates a strongly continuous semigroup of contractions. Moreover, there exist a positive value $\mu$ and a self-adjoint, positive and coercive operator $\Pl\in\LR(\Hi)$ such that, for every $\phi\in~ D(\A)$
    \begin{equation}
    \label{eq:lyap-ineq}
        \langle \Pl \A \phi,\phi\rangle_{\Hi} + \langle \Pl \phi,\A \phi\rangle_{\Hi}  \leq - \mu \Vert \phi\Vert^2_{\Hi}.
    \end{equation}
\end{assumption}

\begin{assumption}
\label{ass:marginallystable}
The pair 
     $(S,\Gamma)$ is controllable. Moreover there exists a symmetric positive definite matrix $Q$ such that 
     $QS+ S^\top Q \leq 0$.   
\end{assumption}

\begin{assumption}\label{ass:disjoint}
 The spectra of $\A$ and $S$ are disjoint and non-empty.
\end{assumption}

Following the forwarding paradigm \cite{mazenc1996adding}, we ask 
the $\phi$-dynamics to be ``open-loop exponentially stable'', see Assumption~\ref{ass:openloopstable}, and the $z$-dynamics to be marginally stable, see
Assumption~\ref{ass:marginallystable}. It is also worth saying that the coercivity assumption written in Assumption~\ref{ass:openloopstable} on $\Pl$ is due to the infinite-dimensional aspect of our problem. Indeed, while such a property is naturally satisfied for ODEs, it is no longer the case for infinite-dimensional systems, as illustrated in \cite{mironchenko2019non}. This assumption is instrumental to show the well-posedness of the closed loop system.
Assumption~\ref{ass:disjoint} is needed to apply the forwarding approach, since it ensures the existence of the feedback-law. Note that when the matrix $S$ has only eigenvalues lying on the imaginary axes (in this case $QS+S^\top Q= 0$) then it is trivially satisfied.

\subsection{Control Design and Well-posedness}
In order to develop the forwarding methodology 
for the class of systems \eqref{eq:open-loop}, 
we first introduce the operator $\M:~D(\A) \rightarrow \RR^r$, defined as the solution of the following \emph{Sylvester equation}
\begin{equation}
\label{eq:Sylvester}
S\M - \M\A = - \Gamma \C.
\end{equation}
Using Assumptions~\ref{ass:openloopstable}, 
\ref{ass:marginallystable} and \ref{ass:disjoint}, and invoking \cite[Lemma 22]{Phong1991}, existence and uniqueness of the solution of the previous equation is guaranteed. 
In the following result, we extend the domain of the operator $\M$ from $D(\A)$ to $\Hi$. With a slight abuse of notation, 
in the rest of the article, 
we denote such an extension as $\M$. We also show that $\M$ can be identified uniquely with a vector of functions in $\Hi$ through the scalar product $\langle \cdot,\cdot\rangle_\Hi$. 

\begin{lemma}
\label{lemma:M}
  The solution $\M$ to \eqref{eq:Sylvester} can be extended to an operator, still denoted by $\M$, with domain $\Hi$. Moreover, for every $\phi\in\Hi$, $\M$ can be uniquely defined as follows:  
$$
\M \phi = \begin{bmatrix}
\langle M_1,\phi\rangle_\Hi & \ldots & \langle M_r,\phi\rangle_\Hi
\end{bmatrix}^\top,
$$
where $M_i\in\Hi$, $i\in \lbrace 1,\ldots,r\rbrace$.
\end{lemma}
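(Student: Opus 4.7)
The plan is to prove that $\M:D(\A)\to\RR^r$ is bounded with respect to the $\|\cdot\|_{\Hi}$ norm, extend it by density to all of $\Hi$, and then invoke the Riesz representation theorem component by component. The key observation I want to exploit is that, by Assumption~\ref{ass:disjoint}, every $\lambda\in\sigma(S)\subset\mathbb{C}$ lies in the resolvent set $\rho(\A)$, so the resolvent $(\A-\lambda\I_\Hi)^{-1}$ is a bounded operator from $\Hi$ into $D(\A)$.

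Complexifying $\Hi$ if needed, I would first pick a left eigenvector $v\in\mathbb{C}^r$ of $S$ with $v^*S=\lambda v^*$. Left-multiplying the Sylvester equation \eqref{eq:Sylvester} by $v^*$ gives, for every $\phi\in D(\A)$,
\[
v^*\M(\A-\lambda\I_\Hi)\phi \;=\; v^*\Gamma\C\phi.
\]
Substituting $\phi=(\A-\lambda\I_\Hi)^{-1}\psi$ for arbitrary $\psi\in\Hi$, the linear form $L_v:\phi\mapsto v^*\M\phi$, initially defined on $D(\A)$, is seen to agree with $\psi\mapsto v^*\Gamma\C(\A-\lambda\I_\Hi)^{-1}\psi$, which is bounded on $\Hi$ since $\C\in\LR(D(\A),\RR^p)$ and $(\A-\lambda\I_\Hi)^{-1}\in\LR(\Hi,D(\A))$. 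Hence $L_v$ admits a unique bounded extension to $\Hi$. When $S$ is not diagonalizable, the same computation applied to a generalized left eigenvector satisfying $v^*(S-\lambda\I_r)=w^*$ produces
\[
L_v\bigl((\A-\lambda\I_\Hi)\phi\bigr) \;=\; L_w(\phi) + v^*\Gamma\C\phi,
\]
and an induction on the level of the Jordan chain yields boundedness of every such $L_v$ on $\Hi$.

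Collecting a (generalized) eigenbasis of $\mathbb{C}^r$ and restricting to real-valued data, one obtains the sought extension $\M\in\LR(\Hi,\RR^r)$. Since each component $\phi\mapsto(\M\phi)_i$ is then a bounded linear functional on the Hilbert space $\Hi$, the Riesz representation theorem provides a unique vector $M_i\in\Hi$ such that $(\M\phi)_i=\langle M_i,\phi\rangle_\Hi$, which is exactly the representation claimed. The only subtlety I anticipate is the Jordan-chain bookkeeping, which is allowed by Assumption~\ref{ass:marginallystable} since the condition $QS+S^\top Q\le 0$ does not preclude a non-normal $S$; this induction is, however, purely finite-dimensional and should pose no serious difficulty.
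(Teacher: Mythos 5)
Your argument is correct, but it takes a genuinely different route from the paper. The paper's proof exploits the invertibility of $\A$ itself (a consequence of the exponential stability in Assumption~\ref{ass:openloopstable}, so that $\A^{-1}\in\LR(\Hi,D(\A))$): it rewrites \eqref{eq:Sylvester} as $\M=\Gamma\C\A^{-1}+S\M\A^{-1}$, observes that the right-hand side is a bounded operator on all of $\Hi$, and then applies Riesz componentwise. You instead exploit Assumption~\ref{ass:disjoint} through the spectrum of $S$: you decompose $\RR^r$ along a (generalized) left eigenbasis of $S$ and obtain, for each eigenvalue $\lambda$, the explicit identity $v^*\M=v^*\Gamma\C(\A-\lambda\I_\Hi)^{-1}$ plus Jordan-chain corrections, which is essentially the classical construction of the solution of a Sylvester equation when one factor is finite dimensional. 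What your approach buys is constructiveness: it would let you dispense with the citation of Phong's lemma for existence, and it yields closed-form expressions for the functionals $L_v$ (hence for the $M_i$) directly in terms of resolvents of $\A$ evaluated at the eigenvalues of $S$. What the paper's approach buys is brevity and the avoidance of complexification and Jordan bookkeeping, at the price of implicitly using that the solution $\M$ provided by the cited lemma is already bounded from $D(\A)$ (with its graph norm) into $\RR^r$, so that $S\M\A^{-1}$ is bounded on $\Hi$. One shared caveat, which affects both proofs equally and is not a gap specific to yours: the identity $v^*\M(\A-\lambda\I_\Hi)\phi=v^*\Gamma\C\phi$ (respectively the paper's rewriting) requires interpreting \eqref{eq:Sylvester} on a domain where $\M\A\phi$ makes sense; since the relevant subspace remains dense in $\Hi$, the density extension goes through unchanged.
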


\begin{proof} One can rewrite \eqref{eq:Sylvester} as
\begin{equation*}
\M = \Gamma\C\A^{-1} + S\M \A^{-1}.
\end{equation*}
Noting that the operator $\A^{-1}$ is in $\LR(\Hi,D(\A))$, one can show that $\M$ can be extended as $\overbar\M :\Hi \rightarrow \RR^r$. In the sequel, we will make the abuse of notation $\overbar \M=\M$. 

The operator $\M$ is therefore a continuous operator since, for every $\phi\in \Hi$, one has
\begin{equation}\label{eq:M_norm}
|\M\phi|\leq \Vert \M\Vert_{\LR(\Hi,\RR^r)} \Vert \phi\Vert_{\Hi}.
\end{equation}
Noticing that, for every $\phi\in \Hi$, $\M\phi\in\mathbb{R}^r$, one can write $\M\phi:=\begin{bmatrix}
(\M\phi)_1 & \ldots & (\M\phi)_r
\end{bmatrix}^\top$. Hence, each operator $\M_i:\phi\in\Hi\mapsto (\M\phi)_i\in\RR$, $i\in\lbrace 1,\ldots,r\rbrace$, is a continuous linear form on $\Hi$. Using the Riesz representation theorem \cite[Theorem 4.12.]{walter1987real}, the proof concludes.
\end{proof}

Given the operator $\M$ defined in \eqref{eq:Sylvester}, we can now design 
the feedback-law for the system \eqref{eq:open-loop}. Furthermore, in order 
to provide a feedback law easy to implement, we look for an output-feedback design that uses only the $z$ variable but not $\phi$. In particular, it is given by 
\begin{equation}
\label{eq:feedback-law}
u(t):= kH Q z(t),
\end{equation}
where $Q$ is given by Assumption~\ref{ass:marginallystable}, $k$ is a positive constant to be defined later on, and $H$, a matrix of appropriate dimension, is defined as
\begin{equation}
\label{eq:H}
H:=\B^*\M^*.
\end{equation}
Denoting the state by $w:=\begin{bmatrix} z & \phi \end{bmatrix}^\top$ and the state space by $\W:=\RR^r \times \Hi$, the closed-loop system therefore reads:
\begin{equation}
\label{eq:closed-loop}
\left\{
\begin{array}{ll}
\frac{d}{dt} w(t) = \F w(t), \\[.5em]
w(0) = w_0,
\end{array}
\right.
\end{equation}
where the operator $\F:D(\F)\subset \W\rightarrow \W$ is defined as
\begin{equation}
\F:=\begin{bmatrix}
\A & k\B H Q\\
\Gamma \C & S
\end{bmatrix},
\end{equation}
and  $D(\F):=D(\A)\times \mathbb{R}^r$, since the operator $\B$ is bounded. We are now in position to state our main results.

\begin{theorem}[Well-posedness]
\label{thm:wp}
Suppose Assumptions~\ref{ass:openloopstable}, \ref{ass:marginallystable} and \ref{ass:disjoint}
hold. Let
 us define $k^*$ as 
\begin{equation}
    \label{eq:kstar}
    k^* := \rho \,
    \max_{s\in(0,1)}\sqrt{\dfrac{1-s}{1+s} \, s}
\end{equation}
with 
\begin{equation}
    \label{eq:rho}
\rho := \dfrac{\mu}{\Vert \Pl\Vert_{\LR(\Hi)} \Vert \B\Vert_{\LR(\RR^m,\Hi)} \Vert \M\Vert_{\LR(\Hi,\mathbb{R}^r)} |H|} .
\end{equation}
For every $k\in (0,k^*)$ and for every initial conditions $(z_0,\phi_0)\in \W$ (resp. $(z_0,\phi_0)\in D(\F))$, there exists a unique solution $(z,w)\in C(\RR_+;\W)$ (resp. $(z_0,\phi_0)\in~ C^1(\RR_+;D(\F))$) to \eqref{eq:closed-loop}.
\end{theorem}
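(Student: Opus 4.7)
The plan is to prove that the closed-loop generator $\F$ generates a strongly continuous semigroup on $\W$, from which the existence of unique mild and classical solutions follows from standard semigroup theory (Pazy). The only real difficulty is that $\C$ is unbounded on $\Hi$, so the off-diagonal block $\Gamma\C$ in $\F$ cannot be absorbed as a bounded perturbation of $\mathrm{diag}(\A,S)$. I will neutralize this using the forwarding change of variables suggested by the Sylvester equation~\eqref{eq:Sylvester}.

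First, I would introduce $\eta := z - \M\phi$. By Lemma~\ref{lemma:M}, $\M\in\LR(\Hi,\RR^r)$, so the map $T:\W\to\W$, $T(\phi,z)=(\phi,z-\M\phi)$, is a bounded linear bijection with bounded inverse $T^{-1}(\phi,\eta)=(\phi,\eta+\M\phi)$. Moreover $T(D(\F))=D(\F)=D(\A)\times\RR^r$, because $T$ acts as the identity on the $\phi$-coordinate and perturbs only the finite-dimensional factor. Consequently $\F$ generates a $C_0$-semigroup on $\W$ if and only if the similar operator $\F' := T\F T^{-1}$ does.

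Second, I would compute $\F'$ explicitly by differentiating $\eta$ along the closed-loop dynamics~\eqref{eq:closed-loop} and using the identity $\M\A\phi = S\M\phi + \Gamma\C\phi$ coming from~\eqref{eq:Sylvester} to cancel the unbounded term $\Gamma\C\phi$. A direct calculation yields
\begin{equation*}
\F' = \begin{bmatrix} \A + k\B HQ\M & k\B HQ \\ -k\M\B HQ\M & S - k\M\B HQ \end{bmatrix},
\end{equation*}
with the same domain $D(\A)\times\RR^r$. The crucial point is that every off-diagonal entry of $\F'$, as well as the perturbation $k\B HQ\M$ of $\A$ in the $(1,1)$-block, belongs to $\LR(\Hi)$, $\LR(\RR^r,\Hi)$, $\LR(\Hi,\RR^r)$ or $\LR(\RR^r)$, thanks to the boundedness of $\B$, $Q$, $H$ and (crucially) $\M$.

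Third, since $\A$ generates a $C_0$-semigroup of contractions by Assumption~\ref{ass:openloopstable} and $\B HQ\M\in\LR(\Hi)$, the bounded perturbation theorem for semigroup generators gives that $\A + k\B HQ\M$ generates a $C_0$-semigroup on $\Hi$. Together with the matrix $S - k\M\B HQ$ on $\RR^r$, the block-diagonal operator $\mathrm{diag}(\A+k\B HQ\M,\,S-k\M\B HQ)$ generates a $C_0$-semigroup on $\W$. Adding the bounded off-diagonal blocks of $\F'$ and applying once more the bounded perturbation theorem shows that $\F'$, and hence $\F = T^{-1}\F'T$, generates a $C_0$-semigroup on $\W$. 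Existence and uniqueness of $C(\RR_+;\W)$ mild solutions for every $w_0\in\W$ and of $C^1(\RR_+;D(\F))$ classical solutions for every $w_0\in D(\F)$ then follow from the standard theory.

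The only subtle step is the first one: identifying the change of variables that trades the unbounded perturbation $\Gamma\C$ for the bounded perturbation $k\B HQ\M$ of $\A$. I would also point out in a short remark that well-posedness actually holds for every $k\in\RR$; the restriction $k\in(0,k^*)$ of the statement is harmless here and is inherited from the exponential stability analysis established in the companion results.
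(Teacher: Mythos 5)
Your argument is correct, but it takes a genuinely different route from the paper's. The paper proves generation via the Lumer--Phillips theorem: it equips $\W$ with the inner product $\langle\cdot,\cdot\rangle_V$ associated with the forwarding Lyapunov functional $V$ (shown equivalent to the usual one in Lemma~\ref{lemma:norm}), establishes that $\F$ is dissipative for this inner product --- this is exactly where the threshold $k^*$ arises, through Young's inequality and an optimization over the auxiliary parameters $\nu$, $p$, $\varepsilon$ --- and then verifies the range condition $\mathrm{Ran}(\F-\lambda \I_\W)=\W$ for $\lambda$ large by reducing \eqref{eq:maximality} to the invertibility of $\I_r+(S-\lambda\I_r)^{-1}\Gamma\C(\A-\lambda\I_\Hi)^{-1}\B HQ$ via a Neumann-series estimate. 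Your similarity transformation $T(\phi,z)=(\phi,z-\M\phi)$, which uses the Sylvester equation \eqref{eq:Sylvester} together with the bounded extension of $\M$ from Lemma~\ref{lemma:M} to trade the unbounded coupling $\Gamma\C$ for bounded perturbations, followed by the bounded perturbation theorem, is a cleaner and in fact stronger argument for well-posedness alone: as you note, it yields generation for every $k\in\RR$, and it bypasses the resolvent estimates of the maximality step entirely. What it does not deliver is the dissipation inequality $\langle\F w,w\rangle_V\leq -a\Vert\phi\Vert^2_\Hi-b|HQz|^2$ of \eqref{eq:lyapunov-ineq1}, which the paper extracts as a by-product of the same computation and which is the key estimate reused (via Remark~\ref{rem:lyap}) in the proof of Theorem~\ref{thm:as}; nor does it show that the closed-loop semigroup is a contraction in the $V$-norm, which is the structural reason the theorem restricts to $k\in(0,k^*)$. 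The two proofs are thus complementary: yours isolates well-posedness at minimal cost and clarifies that the bound on $k$ is not needed for it, while the paper's front-loads the Lyapunov analysis that the stability theorem requires.
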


Theorem~\ref{thm:wp} ensures  
the well posedness
of the solutions of the system 
\eqref{eq:open-loop} in closed-loop 
with the feedback law \eqref{eq:feedback-law}, 
provided that the parameter 
$k$ in the feedback law \eqref{eq:feedback-law}
is selected small enough.

\subsection{Sufficient condition for exponential stability}

In order to show the stability of the origin of 
the closed-loop system
\eqref{eq:closed-loop},
the following extra assumption is needed. 

\begin{assumption}
\label{ass:observability}
The pair $(S,HQ)$ is detectable\footnote{In other words, there exists $L$ such that 
the matrix $(S-LHQ)$ is Hurwitz.},
with $Q$ given by Assumption~\ref{ass:marginallystable} and $H$ defined as in \eqref{eq:H}.
\end{assumption}

A detailed discussion about Assumption~\ref{ass:observability} is given in 
the next section, after establishing the second main result of this article.

\begin{theorem}[Global exponential stability]
\label{thm:as}
Suppose Assumptions~\ref{ass:openloopstable}, \ref{ass:marginallystable}, \ref{ass:disjoint} and \ref{ass:observability} hold. Then with $k^*$ defined in \eqref{eq:kstar}, for every $k\in (0,k^*)$ and for every initial conditions, $(z_0,\phi_0)\in W$, the equilibrium point $0$ of \eqref{eq:closed-loop} is globally exponentially stable.
\end{theorem}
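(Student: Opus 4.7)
The plan is to carry out the standard forwarding construction in three steps: a change of variables, a weak Lyapunov estimate, and a strictification step using Assumption~\ref{ass:observability} in the spirit of \cite{praly2019observers}. Since Theorem~\ref{thm:wp} already provides well-posedness of classical solutions on $D(\F)$ for every $k\in(0,k^*)$, it suffices to establish the decay estimate on $D(\F)$ and then extend it to $\W$ by a density argument.

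First I would set $\xi := z - \M \phi$. By the Sylvester equation \eqref{eq:Sylvester} the (unbounded) output $\C\phi$ cancels in the $\xi$-dynamics, and using the identity $\M \B = H^\top$, which follows from $H = \B^* \M^*$ together with the fact that $\M \B$ is a real $r\times m$ matrix, the closed loop rewrites
\begin{align*}
\tfrac{d}{dt}\phi &= \A\phi + k\B H Q\,\M\phi + k\B H Q\,\xi, \\
\tfrac{d}{dt}\xi &= (S - k H^\top H Q)\,\xi - k H^\top H Q\,\M\phi.
\end{align*}
By Lemma~\ref{lemma:M} the map $(\phi,z)\mapsto(\phi,\xi)$ is a topological isomorphism of $\W$, so it is enough to prove exponential stability of the origin in these new coordinates. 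I would then consider $V_0(\phi,\xi) := \langle \Pl\phi, \phi\rangle_\Hi + \alpha\,\xi^\top Q\xi$ for some $\alpha>0$ to be chosen. Differentiating along classical solutions and using Assumption~\ref{ass:openloopstable} on the $\phi$-part, Assumption~\ref{ass:marginallystable} ($QS + S^\top Q \leq 0$) on the $\xi$-part, and Young's inequality on the cross terms (with the bound $|\M \phi|\leq \|\M\|_{\LR(\Hi,\RR^r)}\|\phi\|_\Hi$ from \eqref{eq:M_norm}), I expect an estimate of the form
$$\dot V_0 \;\leq\; -c_1 \|\phi\|_\Hi^2 - c_2\, k\,|H Q\,\xi|^2$$
with $c_1,c_2>0$, for every $k\in(0,k^*)$; the precise value of $k^*$ in \eqref{eq:kstar}--\eqref{eq:rho} should arise by optimally balancing $\alpha$ and the Young parameter $s\in(0,1)$. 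This bound is only semi-definite in $\xi$, since $|HQ\xi|$ does not by itself control $|\xi|$.

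To strictify, I would invoke Assumption~\ref{ass:observability}: there exist $L\in \RR^{r\times m}$ and a symmetric positive definite $R\in\RR^{r\times r}$ with $R(S-LHQ) + (S-LHQ)^\top R = -\I_r$, equivalently $RS + S^\top R = -\I_r + RLHQ + (RLHQ)^\top$. Setting $V(\phi,\xi) := V_0(\phi,\xi) + \beta\,\xi^\top R\,\xi$ with $\beta>0$, the extra contribution to $\dot V$ equals $\beta\,\xi^\top(RS+S^\top R)\xi - 2\beta k\,\xi^\top R H^\top HQ\,(\xi+\M\phi)$; by the identity above and Young's inequality it is bounded by $-\beta |\xi|^2 + O(\beta)\,|HQ\xi|^2 + O(\beta)\,\|\phi\|_\Hi^2$. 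Choosing $\beta$ small enough relative to $c_1$ and $c_2 k$ absorbs these corrections into the reserves already present in $\dot V_0$, yielding $\dot V \leq -c(\|\phi\|_\Hi^2 + |\xi|^2) \leq -\gamma V$ for some $c,\gamma>0$. Global exponential stability on $D(\F)$ then follows by Gronwall's inequality, and extends to all of $\W$ by a density argument, using the strong continuity of the semigroup generated by $\F$ that is guaranteed by Theorem~\ref{thm:wp}.

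The main obstacle I anticipate lies in the second step: the Young's inequality estimates must be carried out with a balance precise enough that the admissible range of $k$ is \emph{exactly} $(0,k^*)$, matching the threshold already required for well-posedness, rather than shrinking to a sub-interval. Two separate cross terms involving $\M\phi$---one in the $\phi$-equation via $k\B HQ\M\phi$, one in the $\xi$-equation via $-kH^\top HQ\M\phi$---must be absorbed simultaneously into the single dissipation reserve $\mu\|\phi\|_\Hi^2$ provided by Assumption~\ref{ass:openloopstable}, and this is what should fix the specific form \eqref{eq:kstar}--\eqref{eq:rho}.
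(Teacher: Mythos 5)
Your proposal is correct and follows essentially the same route as the paper: the change of variables $\xi = z - \M\phi$ is exactly the structure already built into the paper's Lyapunov functional $V(z,\phi)=\langle \Pl\phi,\phi\rangle_\Hi + p\,(z-\M\phi)^\top Q (z-\M\phi)$, the weak estimate $\dot V \leq -a\|\phi\|_\Hi^2 - b\,|HQz|^2$ you propose to re-derive is already available from the dissipativity computation in the proof of Theorem~\ref{thm:wp} (so the worry about recovering the exact threshold $k^*$ is moot --- the paper simply reuses \eqref{eq:lyapunov-ineq2}), and your strictification by adding $\beta\,\xi^\top R\,\xi$ with $R$ from the detectability Lyapunov inequality and $\beta$ small is precisely the paper's $W = V + c\,U$ construction, followed by the same density argument to pass from $D(\F)$ to $\W$.
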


Theorem~\ref{thm:as} states that 
the origin of the closed-loop system 
\eqref{eq:open-loop}, \eqref{eq:feedback-law} is exponentially stable, provided
the detectability condition of Assumption~\ref{ass:observability} holds.
The motivation to such an assumption is that 
such a condition is sufficient 
to build a strict-Lyapunov function as shown in its proof in Section~\ref{sec_proof_2}.

\subsection{About Assumption~\ref{ass:observability}}

Assumption~\ref{ass:observability} can be directly checked provided
that one is able to solve the Sylvester equation \eqref{eq:Sylvester}, 
whose solution is anyway needed in order to design the feedback law
\eqref{eq:feedback-law}. Although such an observability condition is 
directly related to the data of the problem (i.e. the operators 
$\A,\B,\C$ and the matrices $S,\Gamma$) via the Sylvester equation \eqref{eq:Sylvester}, 
one may ask whether alternative conditions can be established.
A partial answer is given in the case in which the matrix $S$ is 
skew-symmetric, i.e. when $Q$ in Assumption~\ref{ass:marginallystable}
coincides with the identity matrix.
In such a case, we have the following result.

\begin{proposition}
\label{prop:observability}
Suppose that $S$ is a skew-symmetric matrix and that the following holds
    \begin{equation}\label{eq:nonres}
        \mathrm{Ran}\begin{bmatrix}
        \A - \I_\Hi \lambda & \B\\
        \C & 0
        \end{bmatrix}= \Hi\times \RR^p
    \end{equation}
    for any $\lambda$ eigenvalue of $S$.
    Then, the pair $(S,H)$ with $H$ defined in \eqref{eq:H} is observable.
\end{proposition}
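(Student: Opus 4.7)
The plan is to use the Hautus (PBH) test for observability of $(S,H)$ and reason by contradiction: suppose there exists $\lambda\in\mathbb{C}$ and $v\in\mathbb{C}^r\setminus\{0\}$ with $Sv=\lambda v$ and $Hv=0$. Working in the complexification of $\Hi$, set $\xi:=\M^*v\in\Hi$; then $\B^*\xi=\B^*\M^*v=Hv=0$. Since $S$ is real skew-symmetric, $\lambda$ is purely imaginary, $\lambda$ is an eigenvalue of $S$, and one gets $v^*S=\lambda v^*$ (use $S^*=-S$ and $\bar\lambda=-\lambda$).

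The next step pairs the Sylvester equation \eqref{eq:Sylvester} with $v$. For every $\phi\in D(\A)$,
\begin{equation*}
v^*(S\M-\M\A)\phi = -v^*\Gamma\C\phi,
\end{equation*}
and using $v^*S=\lambda v^*$ together with $v^*\M\psi=\langle\psi,\xi\rangle_\Hi$ for all $\psi\in\Hi$, this rewrites as
\begin{equation*}
\langle (\A-\lambda \I_\Hi)\phi,\xi\rangle_\Hi = \langle \C\phi,\Gamma^\top v\rangle_{\mathbb{C}^p}, \qquad \forall \phi\in D(\A).
\end{equation*}
Note that this calculation avoids having to assume $\xi\in D(\A^*)$, which would otherwise be a thorny point since $\C$ is unbounded — this is the main technical hurdle and the reason for pairing directly with $v$ rather than manipulating adjoints.

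Now I would apply the range condition \eqref{eq:nonres} at the eigenvalue $\lambda$: for any $(f,g)\in\Hi\times\mathbb{C}^p$, there exist $\phi\in D(\A)$ and $u\in\mathbb{C}^m$ with $(\A-\lambda\I_\Hi)\phi+\B u=f$ and $\C\phi=g$. Substituting into the above identity and using $\B^*\xi=0$ yields
\begin{equation*}
\langle f,\xi\rangle_\Hi = \langle g, \Gamma^\top v\rangle_{\mathbb{C}^p}, \qquad \forall (f,g)\in\Hi\times\mathbb{C}^p.
\end{equation*}
Taking $g=0$ with $f$ arbitrary forces $\xi=0$, i.e.\ $\M^*v=0$; taking $f=0$ with $g$ arbitrary forces $\Gamma^\top v=0$.

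Finally, I would derive the contradiction from Assumption~\ref{ass:marginallystable}. The relations $Sv=\lambda v$ and $\Gamma^\top v=0$ give $v^*S=\lambda v^*$ and $v^*\Gamma=0$, so the row vector $v^*$ witnesses a rank deficiency of $[\lambda \I_r-S,\Gamma]$, contradicting the PBH criterion for controllability of $(S,\Gamma)$. Hence $v=0$, contradicting $v\neq 0$, and we conclude that $(S,H)$ is observable.
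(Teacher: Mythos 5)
Your proof is correct and follows essentially the same route as the paper's: a Hautus/PBH contradiction argument, pairing the Sylvester equation \eqref{eq:Sylvester} with the eigenvector, invoking the non-resonance condition \eqref{eq:nonres} to force $\M^*v=0$ and $\Gamma^\top v=0$, and contradicting the controllability of $(S,\Gamma)$. Your write-up is merely more explicit about the complexification, the avoidance of adjoint-domain issues, and the surjectivity step that the paper compresses into one line.
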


\begin{proof}
Pick $\varphi$ an eigenvector of $S$ and denote $-\lambda$ its corresponding eigenvalue. Recalling that $H=\mathcal{B}^*\mathcal{M}^*$, we suppose that $\mathcal{B}^*\mathcal{M}^*\varphi=0$ and look for a contradiction. Since $S$ is skew-symmetric, $\varphi$ is also an eigenvector of $S^\top$, with its corresponding eigenvalue $\lambda$. Then, according to \eqref{eq:Sylvester}, one has
\begin{equation*}
    \varphi^\top S\M - \varphi^\top \M\A = -\varphi^\top \Gamma \C,
\end{equation*}
that is
\begin{equation*}
    \varphi^\top \M(\lambda \I_\Hi- \A) + \varphi^\top \Gamma \C = 0.
\end{equation*}
This implies that:
\begin{equation}
    \begin{bmatrix}
    \varphi^\top \M & \varphi^\top \Gamma
    \end{bmatrix}\begin{bmatrix}
    \lambda \I_{\Hi} - \A & \B\\
    \mathcal{C} & 0
    \end{bmatrix}=0
\end{equation}
Due to the condition \eqref{eq:nonres} one has that $\Gamma^\top \varphi=0$, which is in contradiction with
the fact that the pair $(S,\Gamma)$ is controllable, 
see Assumption~\ref{ass:marginallystable}. This concludes the proof of the proposition.
\end{proof}

 The condition \eqref{eq:nonres} is known also in output regulation theory  as the \emph{non-resonance condition}. 
 In the context of 
 finite-dimensional linear systems (i.e. when the operators $\A,\B,\C$ are matrices), such a condition is also shown to be necessary and sufficient
  for the controllability of the cascade system  \eqref{eq:open-loop} together with the controllability of the pair $(S,\Gamma)$, see
\cite{davison1975new}. Note   that  
the condition 
\eqref{eq:nonres}
implies also
that $m\geq p$, i.e., the number of control inputs $u$ is larger or equal the the number of outputs $\C \phi$.

As a consequence, when $S$ is a skew-symmetric matrix, 
the Assumption~\ref{ass:observability} in Theorem~\ref{thm:as}
can be replaced by the condition \eqref{eq:nonres}.
We recover then the invertibility condition in Theorem 5.2 and 5.3 of \cite{paunonen2015controller}, 
or the controllability condition 
of Theorem 1 
in \cite{deutscher2019robust} 
for the particular case of 
parabolic partial
integro-differential equations.
In this case, 
with respect to \cite{paunonen2015controller},
the interest of 
the proof of Theorem~\ref{thm:as} is that we provide a
Lyapunov functional for the closed-loop system and a design
of a feedback which is not based
on the computation of the transfer function of the infinite-dimensional system 
$(\A,\B,\C)$, i.e. the computation of 
$\C(\lambda \I_\Hi - \A)^{-1}\B$.
With respect to \cite{deutscher2019robust}, 
we generalize the classes of PDEs 
represented by the operator $\A$, see, the 
example at the end given below in Section~\ref{sec_illustration}
or the special case of integral action control
in  Korteweg-de Vries  equation \cite{balogoun2021iss}.

Finally, it is worth to highlight that in practice, verifying
the condition \eqref{eq:nonres} can be harder than
verifying the detectability condition of Assumption~\ref{ass:observability}, 
see, for instance, \cite{terrand2019adding,deutscher2019robust} and also 
the example given below in Section~\ref{sec_illustration}.

\section{Proofs of the main results}
\label{sec_proofs}

\subsection{A preliminary result}

The proofs of our main results rely on Lyapunov arguments. For this, we introduce  the following Lyapunov functional
\begin{equation}
\label{eq:lyapunov}
V(z,\phi):=\langle \Pl\phi,\phi\rangle_{\Hi} + p (z-\M \phi)^\top Q (z - \M \phi)
\end{equation}
where $p>0$ is a positive constant to be defined later on. 
Associated to this Lyapunov functional, one can define a scalar product, 
defined as 
\begin{equation}
\langle w_1,w_2\rangle_V:= \langle \Pl\phi_1,\phi_2\rangle_{\Hi} + p (z_1-\M\phi_1)^\top Q (z_2-\M\phi_2) 
\end{equation}
for any $w_1,w_2\in W$.
It is easy to verify  that $V(z,\phi)=\Vert w\Vert^2_V$. 
Before providing  the proof of our main results, let us prove that the square root of this Lyapunov functional (resp. this scalar product) is equivalent to the usual norm (resp. the usual scalar product).

\begin{lemma}\label{lemma:norm}
The square root of $V$ and $\langle \cdot,\cdot\rangle_V$ are equivalent to the usual norm and the usual scalar product in $\RR^r\times \Hi$, respectively.
\end{lemma}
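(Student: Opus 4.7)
The plan is to reduce everything to a two-sided norm bound: it suffices to exhibit constants $c_1,c_2>0$ such that
$$
c_1 \bigl(|z|^2 + \|\phi\|_\Hi^2\bigr) \;\leq\; V(z,\phi) \;\leq\; c_2 \bigl(|z|^2 + \|\phi\|_\Hi^2\bigr),
$$
because the equivalence of the associated scalar products then follows from the polarization identity, together with a Cauchy--Schwarz estimate on each side. The ingredients available for both inequalities are already on the table: the \emph{coercivity and boundedness} of $\Pl$ furnished by Assumption~\ref{ass:openloopstable}, which gives $\alpha \|\phi\|_\Hi^2 \leq \langle \Pl \phi, \phi \rangle_\Hi \leq \|\Pl\|_{\LR(\Hi)} \|\phi\|_\Hi^2$ for some $\alpha>0$; the \emph{positive definiteness} of $Q$ from Assumption~\ref{ass:marginallystable}, giving $\lambda_{\min}(Q)|v|^2 \leq v^\top Q v \leq |Q||v|^2$ for every $v\in\RR^r$; and the \emph{boundedness} of $\M$ furnished by Lemma~\ref{lemma:M}, namely $|\M\phi| \leq \|\M\|_{\LR(\Hi,\RR^r)} \|\phi\|_\Hi$.

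The upper bound is immediate: I would bound $\langle \Pl\phi,\phi\rangle_\Hi$ by $\|\Pl\|_{\LR(\Hi)} \|\phi\|_\Hi^2$, expand the cross term using $|z-\M\phi|^2 \leq 2|z|^2 + 2\|\M\|_{\LR(\Hi,\RR^r)}^2\|\phi\|_\Hi^2$, and read off $c_2$ as a function of $p$, $\|Q\|$, $\|\Pl\|_{\LR(\Hi)}$ and $\|\M\|_{\LR(\Hi,\RR^r)}$. The lower bound is the only step that requires a small twist, since $V$ controls $|z-\M\phi|^2$ rather than $|z|^2$ directly. I would first extract
$$
\|\phi\|_\Hi^2 \;\leq\; \tfrac{1}{\alpha}\, V(z,\phi), \qquad |z-\M\phi|^2 \;\leq\; \tfrac{1}{p \lambda_{\min}(Q)} \, V(z,\phi),
$$
and then recover $|z|^2$ through the triangle-type inequality $|z|^2 \leq 2|z-\M\phi|^2 + 2\|\M\|_{\LR(\Hi,\RR^r)}^2 \|\phi\|_\Hi^2$. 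Summing the resulting estimates yields $|z|^2 + \|\phi\|_\Hi^2 \leq c_1^{-1} V(z,\phi)$ with $c_1^{-1}$ depending explicitly on $\alpha$, $p$, $\lambda_{\min}(Q)$ and $\|\M\|_{\LR(\Hi,\RR^r)}$.

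The equivalence of the scalar products is then a formal consequence: since $\langle w_1,w_2\rangle_V = \tfrac{1}{4}\bigl(\|w_1+w_2\|_V^2 - \|w_1-w_2\|_V^2\bigr)$ (and similarly for the usual inner product on $\RR^r\times \Hi$), the two-sided norm bound immediately produces two-sided bounds comparing the two bilinear forms on pairs $(w_1,w_2)$. I do not anticipate any real obstacle in this lemma; the only subtle point worth emphasizing is that the lower bound genuinely uses the \emph{coercivity} of $\Pl$, which, as the authors already remark, is the non-trivial infinite-dimensional strengthening of the usual finite-dimensional positive-definiteness hypothesis and is precisely what makes the norm $\sqrt{V}$ topologically equivalent to the ambient norm on $\W$.
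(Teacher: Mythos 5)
Your proposal is correct and follows essentially the same route as the paper: coercivity and boundedness of $\Pl$, positive definiteness of $Q$, boundedness of $\M$ from Lemma~\ref{lemma:M}, and a triangle-type inequality to recover $|z|^2$ from $|z-\M\phi|^2$ and $\|\phi\|_\Hi^2$, with the scalar-product equivalence reduced to the norm equivalence. The only (cosmetic) difference is in the lower bound, where the paper uses the inequality $|v_1-v_2|^2\geq \theta\bigl(\tfrac12|v_1|^2-|v_2|^2\bigr)$ and absorbs the negative $\|\phi\|_\Hi^2$ contribution by taking $\theta$ small, whereas you bound each nonnegative summand of $V$ separately by $V$ itself and then reassemble $|z|^2$ --- which avoids the small-parameter choice and is, if anything, slightly cleaner.
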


\begin{proof}
Proving that $\sqrt{V}$ is equivalent to the usual norm in $\RR^r \times \Hi$ is sufficient to prove that $\langle \cdot,\cdot\rangle_V$ is equivalent to the usual scalar product. 

First, note that, using the Cauchy-Schwarz inequality
\begin{multline}
\label{eq:equivalence1}
V(z,\phi)\leq  \Vert \Pl\Vert_{\LR(\Hi)} \Vert \phi\Vert^2 +p q |z|^2 
\\ 
 \quad +p q\Vert \M\Vert^2_{\LR(D(\A),\RR^r)} \Vert \phi\Vert^2_{\Hi},
\end{multline}
where $q:= |Q|$.
Second, since $\Pl$ is coercive, there exists $\alpha>0$ such that $\langle \Pl\phi,\phi\rangle_{\Hi}\geq \alpha \Vert \phi\Vert^2_{\Hi}$. Note, moreover, that one can show that the following inequality 
$$
|v_1-v_2|^2 \geq \theta\left(\frac{1}{2} |v_1|^2 - |v_2|^2\right)
$$
holds for every $\theta\in (0,1)$, and for every $v_1,v_2\in\mathbb{R}^r$.
These properties together yield the following
\begin{align}
V(z,\phi)\geq &\alpha \Vert \phi\Vert_{\Hi}^2 + \theta q \left(p|z|^2 - p\Vert \M\Vert_{\LR(D(\A),\RR^r)} \Vert \phi\Vert^2_{\Hi}\right)\notag\\
\geq &(\alpha - qp\theta \Vert \M\Vert_{\LR(D(\A),\RR^r)})\Vert \phi\Vert_{\Hi} + qp\theta |z|^2. \label{eq:equivalence2}
\end{align}
Equations \eqref{eq:equivalence1} and \eqref{eq:equivalence2} with $\theta$ sufficiently small are sufficient to prove that $\sqrt{V}$ is equivalent to the usual norm, completing  the proof of this lemma.\end{proof}

\subsection{Proof of Theorem \ref{thm:wp}}
\label{sec_proof_1}

This subsection is devoted to the prove Theorem \ref{thm:wp}. Our strategy relies on semigroup arguments.
In particular, to prove this well-posedness result, we will show that $\F$ generates a strongly continuous semigroup of contractions. Then, applying \cite[Theorem 4.3]{pazy1983semigroups}, one can deduce the statement of Theorem \ref{thm:wp} invoking \cite[Theorems 1.3. $\&$ 1.4.]{pazy1983semigroups}. Showing this needs to prove that the operator $\F$ is dissipative and  maximal.

\paragraph{Dissipativity of $\F$} For any $w\in D(\F)$, one has
\begin{align*}
\langle \F w,w\rangle_V= &\langle \Pl(\A\phi+k\B HQz),\phi\rangle_{\Hi} \\
&+ \langle \Pl \phi,\A\phi + k\B H z\rangle_{\Hi}\\
&+ 2p (z-\M\phi)^\top Q
(Sz+ \Gamma \C \phi -\M \A \phi) \\
& -  2p(z-\M\phi)^\top Q (k\M\B H Q z).
\end{align*}
Using the Sylvester equation \eqref{eq:Sylvester}, the Lyapunov inequality given in \eqref{eq:lyap-ineq} and the fact that $S$ 
satisfies Assumption~\ref{ass:marginallystable},
the third term of the previous equation satisfy 
\begin{align*}
     2p(z-\M\phi)^\top Q 
     (Sz+ \Gamma \C \phi -\M \A \phi)  = &
    \\
 p (z - \M \phi)^\top ( Q S + S^\top Q) (z-\M\phi) 
& \leq 0.
\end{align*}
As a consequence, 
one obtains, for any $w\in D(\F)$,
\begin{align*}
&\langle \F w,w\rangle_V 
\\
&\qquad \leq -\mu \Vert \phi\Vert^2_\Hi + 2\langle k\Pl\B HQ z,\phi\rangle_{\Hi} \\
&\qquad \qquad- 2p(z-\M\phi)^\top Q (k\M\B H Q z)\\
&\qquad \leq  - \mu \Vert \phi\Vert^2_\Hi - 2pk|HQz|^2\\
&\qquad\qquad + 2k\langle \Pl\B HQz,\phi\rangle_{\Hi} + 2pk z^\top QH^\top H \M\phi,
\end{align*}
where, in the last line, we have used the definition of $H$ given in \eqref{eq:H}.
Then, using the Young's inequality, one obtains, for every $\nu>0$
\begin{multline*}
\langle \Pl \B HQz,\phi\rangle_{\Hi} \leq \\ \nu \Vert \phi\Vert^2_{\Hi} 
 + \dfrac1{\nu} \Vert \Pl\Vert^2_{\LR(\Hi)} \Vert \B\Vert^2_{\LR(\RR^m,\Hi)}|HQz|^2
\end{multline*}
and 
\begin{multline*}
2z^\top Q H^\top H \M\phi \leq \\  \Vert \M\Vert^2_{\LR(\Hi,\mathbb{R}^r)} |H|^2\Vert \phi\Vert^2_{\Hi} + |HQz|^2.
\end{multline*}
Combining all the equations together, one has, for all $w\in D(\F)$
\begin{align}\notag
&\langle \F w,w\rangle_V \leq -\left(\mu-{pk} \Vert \M\Vert^2_{\LR(\Hi,\mathbb{R}^r)} |H|^2 - {k}\nu\right) \Vert \phi\Vert_{\Hi}^2\\
& \quad - k\left(p - \dfrac{1}\nu \Vert \Pl\Vert^2_{\LR(\Hi)} \Vert \B\Vert^2_{\LR(\RR^m,\Hi)}\right) |HQz|^2.
\label{eq:Lyap_1}
\end{align}
Then, given any $\varepsilon\in(0,1)$,
select $\nu$ and $p$ as
$$
\nu = \mu\dfrac{1-\varepsilon}k,
\qquad p = \dfrac{1}{\nu} \Vert \Pl\Vert^2_{\LR(\Hi)} \Vert \B\Vert^2_{\LR(\RR^m,\Hi)} (1+\varepsilon)
$$
Inequality \eqref{eq:Lyap_1}
gives
\begin{multline}
\langle \F w,w\rangle_V \leq
-\mu\left(\varepsilon-  \dfrac{1+\varepsilon}{1-\varepsilon} \dfrac{k^2}{\rho^2} \right) \Vert \phi\Vert_{\Hi}^2.
\\ 
- \dfrac{\varepsilon}{1+\varepsilon} kp  |HQz|^2.
\label{eq:Lyap_2}
\end{multline}
with $\rho$ defined as in \eqref{eq:rho}.
Then, 
define 
\begin{equation*}
\varepsilon^* = 
\underset{s\in(0,1)}{\argmax} \;
 \sqrt{\dfrac{s(1-s)}{1+s}} 
\end{equation*}
and select $\varepsilon= \varepsilon^*$.
By definition of $k^*$
given in the statement of the theorem, 
we conclude, 
from inequality \eqref{eq:Lyap_2}, 
that
for any $k\in (0,k^\star)$, 
there exists positive constants $a,b>0$ such that the following 
inequality
\begin{align}
\label{eq:lyapunov-ineq1}
\langle \F w,w\rangle_{V} \leq -a\Vert \phi\Vert^2_{\Hi} - b|Hz|^2,
\end{align}
holds for all  $w\in D(\F)$, 
thus showing that $\F$ is dissipative.

\paragraph{Maximality of $\F$} Proving that $\F$ is maximal consists in showing that, for a given $\lambda>0$, one has
$$
\W=\mathrm{Ran}(\F -\lambda \I_W).
$$
This reduces to proving that, for all $w\in \W$, there exists $\tilde{w}\in D(\F)$ such that
$$
\F \tilde{w} - \lambda \tilde{w} = w,
$$
which corresponds to the following problem: for all $(z,\phi)\in \W$, there exists a unique $(\tilde{z},\tilde{\phi)}\in D(\F)$
\begin{equation}
\label{eq:maximality}
\left\{
\begin{array}{ll}
\A\tilde{\phi} + \B HQ\tilde{z} - \lambda \tilde{\phi} = \phi,\\
S \tilde{z} + \Gamma \C \tilde{\phi} - \lambda \tilde{z} = z. 
\end{array}
\right.
\end{equation}
From the first line, using the fact that $\A$ generates a strongly continuous semigroup of contractions which is exponentially stable, one has
$$
\tilde{\phi} = (\A-\lambda \I_\Hi)^{-1}(\phi - \B HQ\tilde{z}),
$$
which implies that, once one has a solution $\tilde{z}$ depending only on $\phi$ and $z$, then one can deduce that there exists a solution $\tilde{\phi}$. The second line of \eqref{eq:maximality} together with the previous equation yields 
\begin{align*}
(S-\Gamma \C (\A - \lambda \I_\Hi)^{-1} \B HQ - &\lambda \I_r)\tilde z =  z \\
& - \Gamma \C(\A-\lambda \I_\Hi)^{-1}\phi,
\end{align*}
that may be rewritten as
\begin{align*}
(\I_r + &(S-\lambda \I_r)^{-1} \Gamma \C (\A - \lambda \I_\Hi)^{-1} \B H Q)\tilde z  = \\
&(S-\lambda \I_r)^{-1} (z-\Gamma \C(\A-\lambda \I_\Hi)^{-1}\phi)
\end{align*}
It remains to show that the matrix $\I_r + (S-\lambda \I_r)^{-1} \Gamma \C (\A - \lambda \I_\Hi)^{-1} \B H Q$ is invertible, and our result follows. To do so, it suffices to prove that there exists $\lambda>0$ such that $| (S-\lambda \I_r)^{-1} \Gamma \C (\A - \lambda \I_\Hi)^{-1} \B H Q|<1$. 

We select now $\lambda$ sufficiently large in order to prove our result. Indeed, if $\lambda > |S|$, then $|(\lambda \I_r - S)^{-1}|\leq \frac{1}{\lambda - |S|}$ (see, e.g., \cite[Lemma 2.2.6.]{tucsnak2009observation}). Therefore, $$|(\lambda \I_r - S)^{-1}|\to 0 \text{ as } \lambda\to +\infty.$$ Moreover, one has:
\begin{align*}
|\Gamma \C &(\A - \lambda \I_\Hi)^{-1} \B H Q|\leq \\
&\alpha_1 \alpha_2\Vert (\A - \lambda \I_\Hi)^{-1}\Vert_{\LR(\Hi,D(\A))},
\end{align*}
where $\alpha_1:=|\Gamma| \Vert \C\Vert_{\LR(D(\A),\mathbb{R}^p)}$ and $\alpha_2:=\Vert \B\Vert_{\LR(\mathbb{R}^m,\Hi)} |HQ|$.

Using \cite[Corollary 2.3.3.]{tucsnak2009observation}, one can show that the term $\Vert (\A - \lambda \I_\Hi)^{-1}\Vert_{\LR(\Hi,D(\A))}$ remains bounded, since $\mathcal{A}$ generates a strongly continuous semigroup of contractions which is exponentially stable. Then, finally, one can select $\lambda$ large enough so that to satisfy $$| (S-\lambda \I_r)^{-1} \Gamma \C (\A - \lambda \I_\Hi)^{-1} \B H Q|<1,$$ which achieves the proof.\qed

\begin{remark}
\label{rem:lyap}
As already noticed, there is a link between the scalar product $\langle \cdot,\cdot\rangle_V$ and the Lyapunov functional $V$. It is also worth noticing that, as soon as one considers strong solution, the time derivative of $V$ along solutions to \eqref{eq:closed-loop} satisfies $\frac{d}{dt} V(w) = \langle \F w,w\rangle_V$, 
which shows that, using \eqref{eq:lyapunov-ineq1},
the following inequality
\begin{equation}
\label{eq:lyapunov-ineq2}
\frac{d}{dt} V(z,\phi) \leq - a \Vert \phi\Vert^2_\mathcal{H} - b |HQz|^2,
\end{equation}
holds
for all $(z,\phi)\in D(\F)$. 
\end{remark}




\subsection{Proof of Theorem \ref{thm:as}}
\label{sec_proof_2}

In this proof, we will focus on strong solutions, i.e. for initial conditions $(z_0,w_0)$ in $D(\F)$. We can deduce the exponential stability result for weak solutions by using a density argument, as the one given in \cite[Lemma 1]{marx2017cone}.

Now, by looking at equation \eqref{eq:lyapunov-ineq2}, 
we can immediately note that the Lyapunov functional $V$ defined in 
\eqref{eq:lyapunov} is not ``strict'', 
in the sense that 
the right hand side of the inequality is  $|HQz|$, which is only a part of the state.
Indeed, in general, the rank of the matrix $H$ is strictly smaller than the dimension
of the state $z$. Note that an exception is the case in which $S=0$ and the dimensions 
of $u$ and $z$ coincide, as in the integral action control 
\cite{terrand2019adding}. In such a case, the Lyapunov function $V$
is sufficient to establish the result of Theorem~\ref{thm:as}.
As a consequence, in order to establish the exponential stability of the
closed-loop system, 
the purpose of this section is to construct
a strict Lyapunov function based
on the detectability properties
of the pair $(S,HQ)$ stated in Assumption~\ref{ass:observability}, by 
strictifying\footnote{Such a technique is inspired by the
use of observers to build strict Lyapunov functions
introduced in \cite{praly2019observers}
and successfully applied also in 
\cite{balogoun2021iss} in the context of a a Korteweg-de Vries equation.
In our particular context, it can be also obtained as a variation  of the Lemma~1 in \cite{astolfi2021nonlinear}.} the Lyapunov function $V$.
In particular,  since the pair $(S,HQ)$
is detectable, there exist
a symmetric positive definite matrix $\Pi$
and  a matrix $L$ such that 
\begin{equation}
    \label{eq:observerLyap}
    \Pi(S- L HQ) +(S- L HQ)^\top \Pi \leq - 2\I_r.
\end{equation}
Then, consider the Lyapunov functional 
$W$ defined as 
\begin{align}
\label{eq:lyapunov2}
\begin{split}
W(z,\phi) &:= V(z,\phi) + cU(z,\phi) ,\\
U(z,\phi) & :=  (z-\M \phi)^\top \Pi (z - \M \phi),
 \end{split}
\end{align}
with $V$ defined as in \eqref{eq:lyapunov}
and  $c>0$ being a small coefficient to be 
selected.
Note that following the computations of Lemma~\ref{lemma:norm}, 
it is straightforward to show that the Lyapunov function 
$W$ defined 
in \eqref{eq:lyapunov2} 
is equivalent to the usual norm 
and the usual scalar product in $\RR^r\times \Hi$ (this can be shown by using 
similar arguments to those employed in Lemma~\ref{lemma:norm}).

Then, we compute 
the time-derivative of $U$ 
along solutions to 
\eqref{eq:closed-loop}.
Using the definition of $\M$ given by \eqref{eq:Sylvester}, 
one obtains
\begin{align*}
\frac{d}{dt} U(z,\phi) 
 = &2(z-\M \phi)^\top \Pi (S- LHQ)(z- \M \phi)
\\
& -2(z-\M \phi)^\top \Pi (k\M \B - L)HQz
\\ & +2(z-\M \phi)^\top \Pi HQ\M \phi.
\end{align*}
Using the Young inequality
and \eqref{eq:M_norm}, the 
last two terms can be bounded as follows
\begin{multline*}
    2(z-\M \phi)^\top \Pi HQ\M \phi
\leq \\
2|\Pi HQ|^2   \Vert \M\Vert_{\LR(\Hi,\RR^r)}^2     \Vert \phi\Vert_{\Hi}^2 + 
 \dfrac{1}2|z-\M \phi|^2
\end{multline*}
and
\begin{multline*}
-2(z-\M \phi)^\top \Pi (k\M \B - L)HQz
\leq \\
2|\Pi (k\M \B - L)|^2 |HQz|^2  +  \dfrac{1}2|z-\M \phi|^2.
\end{multline*}
Therefore, using 
\eqref{eq:observerLyap}, 
we further obtain
\begin{align}\label{eq:Uder}
\frac{d}{dt} U(z,\phi)  \leq  & -  |z-\M \phi|^2
  +\nu_1 |\phi|^2  + \nu_2|HQz|^2 ,
\end{align}
with $\nu_1,\nu_2$ defined as
\begin{align*}
\nu_1 &= 2|\Pi HQ|^2  \Vert \M\Vert_{\LR(\Hi,\RR^r)}^2 ,
\\
\nu_2 &= 2|\Pi (k\M B - L)|^2 |HQz|^2.    
\end{align*}
Combining 
\eqref{eq:Uder} with 
\eqref{eq:lyapunov-ineq2}, 
the time-derivative of $W$
can be finally computed as
\begin{multline*}
\frac{d}{dt} W(z,\phi)  \leq -(a- c\nu_1)   \Vert \phi\Vert_{\Hi}^2  - 
(b- c\nu_2)|HQz|^2  \\ - c|z-\M \phi|^2 .
\end{multline*}
By selecting
$$
c < \min\left\{ \dfrac{a}{\nu_1}, \;
\dfrac{b}{\nu_2}\right\},
$$
we finally obtain
\begin{equation}
    \label{eq:Wderiv}
\frac{d}{dt} W(z,\phi)  \leq -\varepsilon  \left( \Vert \phi\Vert_{\Hi}^2  + c|z-\M \phi|^2 \right)
\end{equation}
for some $\varepsilon>0$,
showing the exponential stability 
of the origin.\qed

\section{An illustration on the heat equation}
\label{sec_illustration}


\subsection{The system under consideration}
We consider, as illustration, the case 
of a linear system controlled via an actuator with dynamics
 described by a heat equation  \cite{krstic2009compensating}.
In particular, for a positive integer $r$ and $(\phi_0,z_0)$ in $L^2(0,1)\times\RR^r$, we consider the following  system
\begin{equation}\label{eq_chaleur}
\left\{
    \begin{array}{ll}
        &\begin{multlined}
        \phi_t(t,x)  =  \phi_{xx}(t,x) + b(x) u(t), \\[-1.5em] (t,x)\in \RR_+\times(0,1),
        \end{multlined}
        \\
        &z_t(t) = S z(t) + \Gamma \phi(t,\ell),\quad t\in\RR_+,
      \\[.5em]
        &\phi(t,0)  = \phi(t,1) = 0,\quad t\in \RR_+,\\
        &(\phi(0,x),z(0))=(\phi_0(x),z_0) ,\ x\in (0,1).\ 
    \end{array}
\right.
\end{equation}
where $b$ is in $L^2(0,1)$ and $\ell$ in $(0,1)$
and with $S=-S^\top$ in $\RR^{r\times r}$ and $\Gamma$ in $\RR^r$ such that the pair $(S,\Gamma)$ is controlable.

This system can be written in the form \eqref{eq:open-loop} by setting $\Hi:=L^2(0,1)$ and
\begin{equation}
    \A \phi:=\phi^{\prime\prime},\qquad \B u:=bu,\qquad \C \phi:= \phi(\ell),
\end{equation}
with $D(\A):=\lbrace \phi\in H^2(0,1)\mid \phi(0)=\phi(1)=0\rbrace$. First note that Assumption \ref{ass:openloopstable} is satisfied. Indeed, the operator $\Pl$ is given by $\I_\Hi$, and $\mu$ is given by $\pi$. Indeed, with some integration by parts, it yields for every  $\phi$ in $D(\A)$
\begin{equation}\langle  \A \phi,\phi\rangle_{\Hi} + \langle  \phi,\A \phi\rangle_{\Hi} = - \Vert \phi^\prime \Vert^2_{\Hi}.
\end{equation}
Using the Poincar\'e inequality, inequality \eqref{eq:lyap-ineq} follows with $\mu=\pi$.
Note also that the couple $(S,\Gamma)$ satisfies Assumption \ref{ass:marginallystable} with $Q=\I_r$.
Moreover, since $S$ is skew adjoint, its eigenvalues are on the imaginary axis, then  Assumption~\ref{ass:disjoint} is satisfied since  $\A$ contains eigenvalues at the left hand side of the imaginary axis.

\subsection{Construction of $\M$ and the feedback law}
To apply the control law \eqref{eq:feedback-law}-\eqref{eq:H}, we look for an operator ${\cal M}:L^2(0,L)\to \RR^{r}$ solution to the Sylvester equation \eqref{eq:Sylvester} which in our context becomes
\begin{equation}
    \label{eq_M_sylvester_chaleur}
\M \phi^{\prime\prime}= S \M \phi + \Gamma \phi(\ell)  \,, \quad \forall \phi\in D(\A).
\end{equation}
Using Lemma \ref{lemma:M}, one can write $\mathcal{M}$ as follows:
\begin{equation}
    \label{eq_M_integral}
\M \phi = \int_0^1 M(x)\phi(x)dx,
\end{equation}
where $M:[0,1]\mapsto\RR^r$ is a continuous function defined as
\begin{equation}
M(x)=\left\{
\begin{array}{l}
E_1\exp(Fx)N_0\,, \hfill x\in (0,\ell)
\\[.5em] 
    E_1\exp(Fx)N_0 + E_1\exp(F(x-\ell))G\,, \\ 
\hfill x\in (\ell,1)
\end{array}
\right.
\end{equation}
where $(F,G,N_0,E_1)$ are matrices respectively in $\RR^{2r\times 2r}$, $\RR^{2r}$, $\RR^{2r}$ and $\RR^{2r}$ defined as
\begin{equation}
\begin{aligned}
    F &= \begin{bmatrix}
 0 & I_r \\ S & 0
 \end{bmatrix}
  , 
 \quad
 G = 
 \begin{bmatrix}
 0 \\ \Gamma
 \end{bmatrix},\quad 
 E_1 &= \begin{bmatrix}
I_r&0
\end{bmatrix},
 \end{aligned}
\end{equation}
and
\begin{equation}
N_0 = 
-\left(
\begin{bmatrix}
E_1\\
E_1\exp(F)
\end{bmatrix}\right) ^{-1}
\begin{bmatrix}
0\\
-E_1\exp(F(1-\ell))G
\end{bmatrix}.
\end{equation}
In this case, it can be shown that the operator given in \eqref{eq_M_integral} satisfies \eqref{eq_M_sylvester_chaleur}.
First of all, note that $M(0)=0$ and that with
\begin{equation}
\begin{bmatrix}
E_1\\
E_1\exp(F)
\end{bmatrix}N_0 + 
\begin{bmatrix}
0\\
-E_1\exp(F(1-\ell))G
\end{bmatrix}=0\ ,
\end{equation}
it yields $M(1)=0$.

Note also that for all $\phi$ in $D(\A)$, using $M(0)=0$, the fact that $M$ is $C^2$ in $(0,\ell)$ and by integration by part twice,
\begin{multline}
\int_0^\ell M(x)\phi^{\prime\prime}(x)dx = S\int_0^\ell M(x)\phi^{\prime\prime}(x)dx \\
+ M(\ell)\phi^\prime(\ell) - E_2\exp(F\ell)N_0\phi(\ell)\ .
\end{multline}
where $E_2=E_1F=\begin{bmatrix}0&\I_r\end{bmatrix}$.

On the other hand, using $M(1)=0$, the fact that $M$ is $C^2$ in $(\ell,1)$ and by integration by part twice,
\begin{multline}
\int_\ell^1 M(x)\phi^{\prime\prime}(x)dx = \\ S\int_\ell^1 M(x)\phi(x)dx+E_2[\exp(F\ell)N_0+G]\phi(\ell)
\\ 
- M(\ell)\phi^\prime(\ell)\ .
\end{multline}
%
Finally, since $E_2G=\Gamma$, it yields
$$
\int_0^1 M(x)\phi^{\prime\prime}(x)dx = S\int_0^1 M(x)\phi(x)dx + \Gamma \phi(\ell)\ ,
$$
which implies that \eqref{eq_M_integral} is satisfied.

Hence, Theorem \ref{thm:wp} ensures that the system given in (\ref{eq_chaleur}) with the control law
\begin{equation}\label{eq_ContrLawHeat}
u(t) = -k H z(t), \quad  H = \left(\int_0^1M(x)b(x)dx\right)^\top,    
\end{equation}
 is well-posed in $\W=(\RR^r\times\Hi)$ for all $k$ in $(0,k^*)$ where $k^*$ can simply be computed  as in \eqref{eq:kstar}.

\subsection{Stabilization of the origin}
 Consider the case in which $r=2$ and $S,\Gamma$ are given by
 \begin{equation}
    S:=\begin{bmatrix}
    0 & 1\\
    -1 & 0
    \end{bmatrix},\qquad \Gamma :=\begin{bmatrix}
    1 & 0
    \end{bmatrix}\ .
 \end{equation}
Due to the particular structure of the matrix $S$, the pair $(S,H)$ is observable if and only if $H\neq 0$.
 Hence, if $H$ defined in \eqref{eq_ContrLawHeat} is non zero,  Theorem \ref{thm:as} ensures exponential stability of the origin of the system (\ref{eq_chaleur}) with the control law \eqref{eq_ContrLawHeat}.

\section{Conclusions}

\label{sec_conclusion}

We have provided in this article a design method for the exponential stabilization of an infinite-dimensional system (supposed to be exponentially stable) coupled with an ODE (supposed to be marginally stable). This design is based on the forwarding method. The exponential stability has been proved thanks to a strictification technique. As further research lines to be followed, let us mention the case of unbounded control operators, which will imply a more sophisticated analysis. We believe also that this approach could be applied to nonlinear systems. 

\bibliographystyle{plain}
\bibliography{bibsm}

\end{document}